\theoremstyle{definition}
\newtheorem{proposition}{Proposition}
\def\BibTeX{{\rm B\kern-.05em{\sc i\kern-.025em b}\kern-.08em
    T\kern-.1667em\lower.7ex\hbox{E}\kern-.125emX}}
\begin{document}

\title{Optimal Virtual Power Plant Investment Planning via Time Series Aggregation with Bounded Error\\
\thanks{Funded by the European Union (ERC, NetZero-Opt, 101116212).}
}

\author{\IEEEauthorblockN{Luca Santosuosso}
\IEEEauthorblockA{\textit{Institute of Electricity Economics and Energy Innovation} \\
\textit{Graz University of Technology}\\
Graz, Austria \\
luca.santosuosso@tugraz.at}
\and
\IEEEauthorblockN{Sonja Wogrin}
\IEEEauthorblockA{\textit{Institute of Electricity Economics and Energy Innovation} \\
\textit{Graz University of Technology}\\
Graz, Austria \\
wogrin@tugraz.at}
}

\maketitle

\begin{abstract}
This study addresses the investment planning problem of a virtual power plant (VPP), formulated as a mixed-integer linear programming (MILP) model.
As the number of binary variables increases and the investment time horizon extends, the problem can become computationally intractable.
To mitigate this issue, time series aggregation (TSA) methods are commonly employed.
However, since TSA typically results in a loss of accuracy, it is standard practice to derive bounds to control the associated error.
Existing methods validate these bounds only in the linear case, and when applied to MILP models, they often yield heuristics that may even produce infeasible solutions.
To bridge this gap, we propose an iterative TSA method for solving the VPP investment planning problem formulated as a MILP model, while ensuring a bounded error in the objective function.
Our main theoretical contribution is to formally demonstrate that the derived bounds remain valid at each iteration.
Notably, the proposed method consistently guarantees feasible solutions throughout the iterative process.
Numerical results show that the proposed TSA method achieves superior computational efficiency compared to standard full-scale optimization. \end{abstract}

\begin{IEEEkeywords}
Time series aggregation, investment, mixed-integer linear programming, computational efficiency, bounds.
\end{IEEEkeywords}

\section{Introduction}
Investment planning is a fundamental problem for virtual power plants (VPPs),
aimed at optimizing the mix and sizing of energy resources while minimizing capital investment and operational costs \cite{BARINGO2023101105}.
The large-scale integration of renewable energy sources, the emergence of new flexibility technologies, and market liberalization
have significantly increased the complexity of investment optimization models,
often resulting in large-scale, strongly NP-hard mixed-integer linear programming (MILP) formulations \cite{GODERBAUER2019343}.
A key challenge lies in balancing modeling accuracy with tractability \cite{10863601}.

To address this trade-off, time series aggregation (TSA) methods are commonly employed \cite{8369128}.
These methods reduce input time series to a smaller set of representative periods,
enabling the formulation of an aggregated optimization model that approximates
the solution of the original, full-scale optimization model while reducing computational burden.

TSA is generally classified into \textit{a-priori} and \textit{a-posteriori} methods \cite{hoffmann2020review}.
A-priori methods typically use clustering techniques to identify representative time periods based on the statistical characteristics of the input data.
Common approaches include k-means \cite{10407974}, k-medoids \cite{SCHUTZ2018570}, and hierarchical clustering \cite{8017598}, among others. These methods have been extensively applied to power system investment planning \cite{9543162}, with the prevailing consensus that their effectiveness must be assessed for each specific application \cite{KOTZUR2018474}.
In contrast, a-posteriori methods employ iterative optimization-based procedures,
aiming to minimize the deviation between aggregated and full-scale optimization model solutions, i.e., the output error \cite{8610317}.

A modeler is generally more concerned with minimizing the output error than with accurately representing the input space of a problem.
This has sparked increasing interest in a-posteriori methods \cite{7527691}.
Notably, the analysis in \cite{10037240} demonstrates the potential for tremendous dimensionality reduction in power systems optimization
(from 8760 hours to just 3 representative hours for a linear economic dispatch model)
while achieving zero output error through a well-designed a-posteriori TSA.
However, in practical applications, due to the typically highly nonlinear relationship between the input space and the optimization outcome,
selecting representative periods that capture the critical input features necessary to recover the true optimum of the full-scale model is often a non-trivial task.

To retain the simplicity of a-priori TSA while leveraging the enhanced performance of a-posteriori methods,
a growing trend involves integrating input data clustering within optimization-based TSA algorithms to bound the optimal objective function value of the full-scale optimization model \cite{BAUMGARTNER2019127}.
This methodology has been applied to energy system synthesis optimization \cite{bahl2018typical}, TSA for optimization models with storage while maintaining temporal chronology \cite{TSO2020115190}, and the optimal design of energy supply systems \cite{YOKOYAMA2021120505}.
Formal theoretical results are provided in \cite{TEICHGRAEBER20191283} to validate the methodology employed for deriving these bounds in linear programs with a specific structure.
However, for MILP models, these methods lack formal performance guarantees and often require additional steps to ensure feasibility \cite{BAHL2017900}.

A TSA method for VPP investment optimization that guarantees bounded output error while maintaining feasibility, regardless of the clustering technique employed, is currently lacking.
This study seeks to bridge this gap.

The main contributions of the paper are as follows:
\begin{itemize}
    \item We frame the VPP investment planning problem as a MILP model and demonstrate that a lower bound on the optimal objective function value of the full-scale investment model can be derived from an appropriately designed aggregated model.
    
    \item Building on this result, we develop an algorithm to iteratively derive upper and lower bounds on the optimal objective function value of the full-scale investment model via TSA, with these bounds proven to remain valid regardless of the clustering technique employed. Additionally, the algorithm guarantees that the solution consistently remains feasible within the original feasible region of the full-scale investment model at each iteration.
    
    \item Finally, we evaluate the performance of the proposed algorithm with different clustering techniques.
\end{itemize}

The performance of the proposed algorithm is validated through a case study on the optimal selection and sizing of technologies for a VPP \cite{10267715}.

The remainder of the article is organized as follows: Section~\ref{sec:Methodology} outlines the problem and presents the proposed TSA algorithm with bounded output error; Section~\ref{sec:Results} discusses the results; and Section~\ref{sec:Conclusion} concludes the study.

\section{Methodology}\label{sec:Methodology}
This section outlines the proposed methodology.
We begin with the problem statement in Subsection~\ref{sec:Methodology_problem_statement},
followed by the formulation of the full-scale and the aggregated optimization models in Subsections \ref{sec:Methodology_full_scale_model} and \ref{sec:Methodology_aggregated_model}, respectively.
Our main theoretical result is presented in Subsection~\ref{sec:Methodology_theoretical_result},
which is employed to bound the optimal objective value of the full-scale model using TSA,
as described in Subsection~\ref{sec:Methodology_algorithm}.

We use bold lowercase symbols and bold uppercase symbols to denote sets of decision variables and parameters, respectively. The notation $|\cdot|$ denotes the cardinality of a set.

\subsection{Problem Statement}\label{sec:Methodology_problem_statement}
The goal is to determine the optimal mix and sizing of power generation units that minimizes both capital investment and operational costs, while meeting a desired energy demand.
This problem is relevant, for instance, to a VPP whose primary objective is to balance power generation and consumption while minimizing costs \cite{BARINGO2023101105}.
Henceforth, we shall refer to this problem as the VPP investment planning problem.

It is noteworthy that the proposed formulation excludes grid constraints, implicitly assuming the VPP does not partake in the grid management.
An extension to incorporate grid constraints will be explored in future work.
Additionally, the investment problem is limited to power generation units, while flexibility technologies (e.g., energy storage systems) are disregarded due to the intertemporal dynamics they involve,
which require further procedures to enforce temporal chronology in the TSA method \cite{CARDONAVASQUEZ2024110267}, to be addressed in future studies.

In the following, let $\boldsymbol{G}$ denote the set of generators, indexed by $g$, and $\boldsymbol{T}$ the set of time periods, indexed by $t$, in the VPP investment planning problem.
Let $C^\mathrm{op}_g$ represent the operational cost of energy generation (\euro/MWh) for the $g$-th generator,
and $C^\mathrm{ns}$ the penalty cost for non-supplied energy demand (\euro/MWh).
Additionally, let $C^\mathrm{inv}_g$ denote the capital cost of capacity expansion (\euro/MW) for the $g$-th generator.
The input time series for the VPP investment planning problem consist of the capacity factors of the generators, denoted by $F_{g,t}$ for generator $g$ at time period $t$,
and the energy demand (MWh) at time period $t$, denoted by $D_t$.

\subsection{The Full-Scale Optimization Model}\label{sec:Methodology_full_scale_model}
This subsection presents the formulation of the full-scale optimization model for the VPP investment planning problem.

Let $x_g$ denote the installed capacity (MW) of the $g$-th generator, and $p_{g,t}$ its power generation (MW) at time period $t$.
Additionally, let $d^\mathrm{ns}_t$ denote the non-supplied energy demand (MWh) in the same period.
The binary variable $b_g \in \{0, 1\}$ is employed to ensure that the installed capacity of the $g$-th generator is either $0$ or within the range $[ \underline{X}_g, \overline{X}_g ]$, where $\underline{X}_g$ and $\overline{X}_g$ are the minimum and maximum allowable installed capacities (MW) for the $g$-th generator, respectively.
We formulate the VPP investment planning as a discrete-time optimization problem over $\boldsymbol{T}$, with sampling time $\Delta$ (hours).

For compactness, we group the decision variables of the full-scale optimization model in the set $\boldsymbol{z}$, as follows:
\begin{equation*}
    \boldsymbol{z} \coloneqq \left\{x_g, b_g, p_{g,t}, d^\mathrm{ns}_t \, | \, g \in \boldsymbol{G}, t \in \boldsymbol{T} \right\}.
\end{equation*}

The objective function of the problem is defined as follows:
\begin{equation}\label{full_scale_investment_model_obj}
    J(\boldsymbol{z}) \coloneqq \sum_{g \in \boldsymbol{G}} C^\mathrm{inv}_g x_g + \sum_{t \in \boldsymbol{T}} \left(\sum_{g \in \boldsymbol{G}} C^\mathrm{op}_g p_{g,t} \Delta + C^\mathrm{ns} d^\mathrm{ns}_t\right),
\end{equation}
which represents the sum of both capital investment and operational costs over the planning horizon $\boldsymbol{T}$.

The \textbf{full-scale optimization model} for the VPP investment planning is formulated as the following MILP model:
\begin{subequations}\label{full_scale_investment_model}
\begin{align}
\min_{\boldsymbol{z}} \quad & J\left(\boldsymbol{z}\right)\\
\textrm{s.t.} \quad & \sum_{g \in \boldsymbol{G}} p_{g, t} \Delta + d^\mathrm{ns}_t = D_t, \quad \forall t, \label{full_scale_investment_model_power_balance}\\
& 0 \leq p_{g,t} \leq F_{g,t} \, x_g, \quad \forall g, \forall t, \label{full_scale_investment_model_gen_limits}\\
& b_g \, \underline{X}_g \leq x_{g} \leq b_g \, \overline{X}_g, \quad \forall g. \label{full_scale_investment_model_inv_limits}
\end{align}
\end{subequations}

In \eqref{full_scale_investment_model}, the power balance within the VPP is ensured by the constraints \eqref{full_scale_investment_model_power_balance}, while the constraints \eqref{full_scale_investment_model_gen_limits} and \eqref{full_scale_investment_model_inv_limits} enforce limits on the power generation and the installed capacity for each generator, respectively.

\subsection{The Aggregated Optimization Model}\label{sec:Methodology_aggregated_model}
This subsection presents the formulation of the aggregated optimization model for the VPP investment planning problem.

Mixed-integer investment planning problems are generally strongly NP-hard \cite{GODERBAUER2019343}.
As the number of binaries and time periods grows, even the stylized optimization model \eqref{full_scale_investment_model} can become computationally demanding or intractable.
To mitigate this, TSA can be used to develop an aggregated version of the full-scale model \eqref{full_scale_investment_model},
solved over a reduced set of representative time periods (or clusters), denoted by $\boldsymbol{K}$ and indexed by $k$.
If $|\boldsymbol{K}| \ll |\boldsymbol{T}|$, solving the aggregated model offers a significant computational advantage over its full-scale counterpart.

Let $\boldsymbol{T}_k$ denote the set of time periods $t \in \boldsymbol{T}$ assigned to the $k$-th cluster via TSA. We define $K := |\boldsymbol{K}|$ and $T_k := |\boldsymbol{T}_k|$.

The aggregated model's decision variables are defined as:
\begin{align}
    \hat{x}_g & \coloneqq x_g, \quad \forall g, \label{full_aggregated_variables_relationship1}\\
    \hat{b}_g & \coloneqq b_g, \quad \forall g, \label{full_aggregated_variables_relationship2}\\
    \hat{p}_{g,k} & \coloneqq \frac{1}{T_k} \sum_{t \in \boldsymbol{T}_k} p_{g,t}, \quad \forall g, \forall k, \label{full_aggregated_variables_relationship3}\\
    \hat{d}^{\mathrm{ns}}_k & \coloneqq \frac{1}{T_k} \sum_{t \in \boldsymbol{T}_k} d_t^{\mathrm{ns}}, \quad \forall k. \label{full_aggregated_variables_relationship4}
\end{align}

For compactness, we group the decision variables \eqref{full_aggregated_variables_relationship1}-\eqref{full_aggregated_variables_relationship4} into the set $\boldsymbol{\hat{z}}$, defined as follows:
\begin{equation*}
    \boldsymbol{\hat{z}} \coloneqq \left\{\hat{x}_g, \hat{b}_g, \hat{p}_{g,k}, \hat{d}^\mathrm{ns}_k \, | \, g \in \boldsymbol{G}, \, k \in \boldsymbol{K}\right\}.
\end{equation*}

The objective function $\hat{J}(\boldsymbol{\hat{z}})$ of the aggregated model is defined to approximate the original objective in \eqref{full_scale_investment_model_obj}, now expressed over $K$ representative time periods:
\begin{align}\label{aggregated_investment_model_obj}
    \hat{J}(\boldsymbol{\hat{z}}) \coloneqq \, & \sum_{g \in \boldsymbol{G}} C^{\mathrm{inv}}_g \hat{x}_g \nonumber\\
    & + \sum_{k \in \boldsymbol{K}} T_k \left(\sum_{g \in \boldsymbol{G}} C^\mathrm{op}_g \hat{p}_{g,k} \Delta + C^\mathrm{ns} \hat{d}^\mathrm{ns}_k\right).
\end{align}

The \textbf{aggregated optimization model} is formulated as the following MILP model:
\begin{subequations}\label{aggregated_investment_model}
\begin{align}
\min_{\boldsymbol{\hat{z}}} \quad & \hat{J}\left(\boldsymbol{\hat{z}}\right)\\
\textrm{s.t.} \quad & \sum_{g \in \boldsymbol{G}} \hat{p}_{g,k} \Delta + \hat{d}^\mathrm{ns}_{k} = \frac{1}{T_k} \sum_{t \in \boldsymbol{T}_k} D_t, \quad \forall k, \label{aggregated_investment_model_power_balance}\\
& 0 \leq \hat{p}_{g,k} \leq \frac{\hat{x}_g}{T_k} \sum_{t \in \boldsymbol{T}_k} F_{g,t}, \quad \forall g, \forall k, \label{aggregated_investment_model_gen_limits}\\
& \hat{b}_g \, \underline{X}_g \leq \hat{x}_{g} \leq \hat{b}_g \, \overline{X}_g, \quad \forall g. \label{aggregated_investment_model_inv_limits}
\end{align}
\end{subequations}

Similar to the full-scale model \eqref{full_scale_investment_model}, the power balance within the VPP is ensured by the constraints \eqref{aggregated_investment_model_power_balance}, while the constraints \eqref{aggregated_investment_model_gen_limits} and \eqref{aggregated_investment_model_inv_limits} enforce limits on the power generation and the installed capacity for each generator, respectively.

\subsection{Main Theoretical Result}\label{sec:Methodology_theoretical_result}
This subsection presents our main theoretical result.

\begin{proposition}\label{prop:main_result}
Let $\boldsymbol{z}$ be a feasible solution to the full-scale model \eqref{full_scale_investment_model}.
Let $\boldsymbol{\hat{z}}$ be derived from $\boldsymbol{z}$ accordingly to \eqref{full_aggregated_variables_relationship1}--\eqref{full_aggregated_variables_relationship4}. Then, $\boldsymbol{\hat{z}}$ is a feasible solution to the aggregated model \eqref{aggregated_investment_model} and it holds that
\begin{equation*}
    J\left(\boldsymbol{z}\right) = \hat{J}\left(\boldsymbol{\hat{z}}\right).
\end{equation*}
\end{proposition}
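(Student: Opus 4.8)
The plan is to verify the two claims separately: first that $\boldsymbol{\hat{z}}$ satisfies every constraint of the aggregated model \eqref{aggregated_investment_model}, and second that the two objective values coincide. Both parts rest on the structural fact that the sets $\{\boldsymbol{T}_k\}_{k \in \boldsymbol{K}}$ partition $\boldsymbol{T}$, so that each time period $t$ lies in exactly one cluster and $\sum_{t \in \boldsymbol{T}} = \sum_{k \in \boldsymbol{K}} \sum_{t \in \boldsymbol{T}_k}$. The central mechanism is that the aggregation maps \eqref{full_aggregated_variables_relationship3}--\eqref{full_aggregated_variables_relationship4} are cluster-wise averages, and averaging preserves linear equalities and inequalities.

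For feasibility, I would treat the three constraint families in turn. The investment limits \eqref{aggregated_investment_model_inv_limits} are immediate, since $\hat{x}_g = x_g$ and $\hat{b}_g = b_g$ make them literally identical to \eqref{full_scale_investment_model_inv_limits}; in particular $\hat{b}_g$ inherits the integrality of $b_g$. For the power balance \eqref{aggregated_investment_model_power_balance}, I would sum the full-scale balance \eqref{full_scale_investment_model_power_balance} over $t \in \boldsymbol{T}_k$ and divide by $T_k$; the left-hand side then collapses, via \eqref{full_aggregated_variables_relationship3}--\eqref{full_aggregated_variables_relationship4}, to $\sum_g \hat{p}_{g,k}\Delta + \hat{d}^{\mathrm{ns}}_k$, and the right-hand side to $\frac{1}{T_k}\sum_{t \in \boldsymbol{T}_k} D_t$, which is exactly \eqref{aggregated_investment_model_power_balance}. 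For the generation limits \eqref{aggregated_investment_model_gen_limits}, nonnegativity of $\hat{p}_{g,k}$ holds because it is an average of the nonnegative quantities $p_{g,t}$; the upper bound follows by averaging the per-period inequality $p_{g,t} \le F_{g,t}\,x_g$ over the cluster and using that $x_g = \hat{x}_g$ is independent of $t$, so that $\frac{1}{T_k}\sum_{t \in \boldsymbol{T}_k} F_{g,t}\,x_g = \frac{\hat{x}_g}{T_k}\sum_{t \in \boldsymbol{T}_k} F_{g,t}$.

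For the objective equality, I would match the two objectives term by term. The investment term agrees outright since $\hat{x}_g = x_g$. For the operational term, I would split $\sum_{t \in \boldsymbol{T}}$ into $\sum_{k}\sum_{t \in \boldsymbol{T}_k}$, pull the cluster-independent costs $C^{\mathrm{op}}_g\Delta$ and $C^{\mathrm{ns}}$ out of the inner sum, and substitute $\sum_{t \in \boldsymbol{T}_k} p_{g,t} = T_k\,\hat{p}_{g,k}$ and $\sum_{t \in \boldsymbol{T}_k} d^{\mathrm{ns}}_t = T_k\,\hat{d}^{\mathrm{ns}}_k$, both of which follow directly from the defining averages \eqref{full_aggregated_variables_relationship3}--\eqref{full_aggregated_variables_relationship4}. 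This reproduces \eqref{aggregated_investment_model_obj} exactly, giving $J(\boldsymbol{z}) = \hat{J}(\boldsymbol{\hat{z}})$.

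I do not expect a genuine obstacle here, as every step is linear bookkeeping; the argument is essentially an exercise in the linearity of the averaging operator. The one point that demands care is that this clean cancellation hinges on linearity of both the objective and the constraints in the decision variables: the generation upper bound survives averaging only because $x_g$ does not depend on $t$, and the scheme would break for nonlinear or intertemporal couplings, consistent with the paper's deliberate exclusion of storage dynamics. I would therefore state the partition property of $\{\boldsymbol{T}_k\}$ explicitly at the outset, since it is the silent hypothesis underlying both the constraint aggregation and the term-by-term objective decomposition.
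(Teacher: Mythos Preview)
Your proposal is correct and follows essentially the same route as the paper: averaging the full-scale constraints over each cluster to obtain the aggregated constraints, noting the investment constraints are unchanged, and then matching the objectives term by term via the partition $\boldsymbol{T} = \bigcup_k \boldsymbol{T}_k$ together with $\sum_{t \in \boldsymbol{T}_k} p_{g,t} = T_k \hat{p}_{g,k}$ and $\sum_{t \in \boldsymbol{T}_k} d^{\mathrm{ns}}_t = T_k \hat{d}^{\mathrm{ns}}_k$. Your explicit remarks on the partition hypothesis and on the integrality of $\hat{b}_g$ are welcome clarifications that the paper leaves implicit.
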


\begin{proof}
Using \eqref{full_aggregated_variables_relationship3} and \eqref{full_aggregated_variables_relationship4},
the power balance constraints \eqref{aggregated_investment_model_power_balance} in the aggregated model are equivalently reformulated as:
\begin{equation}\label{aggregated_investment_model_power_balance_proof}
    \sum_{t \in \boldsymbol{T}_k} \left( \, \sum_{g \in \boldsymbol{G}} p_{g,t} \Delta + d^{\mathrm{ns}}_{t} \right) = \sum_{t \in \boldsymbol{T}_k} D_t, \quad \forall k.
\end{equation}

Using \eqref{full_aggregated_variables_relationship1} and \eqref{full_aggregated_variables_relationship3},
the power generation constraints \eqref{aggregated_investment_model_gen_limits} in the aggregated model are equivalently reformulated as:
\begin{equation}\label{aggregated_investment_model_gen_limits_proof}
    0 \leq \sum_{t \in \boldsymbol{T}_k} p_{g, t} \leq x_g \sum_{t \in \boldsymbol{T}_k} F_{g, t}, \quad \forall g, \forall k.
\end{equation}

Clearly, the individual constraints \eqref{full_scale_investment_model_power_balance} and \eqref{full_scale_investment_model_gen_limits} imply the aggregate constraints \eqref{aggregated_investment_model_power_balance_proof} and \eqref{aggregated_investment_model_gen_limits_proof}, respectively.
Moreover, from \eqref{full_aggregated_variables_relationship1} and \eqref{full_aggregated_variables_relationship2}, the constraints \eqref{full_scale_investment_model_inv_limits} of the full-scale model are equivalent to the constraints \eqref{aggregated_investment_model_inv_limits} of the aggregated model.
Therefore, $\boldsymbol{\hat{z}}$ is a feasible solution to the aggregated model \eqref{aggregated_investment_model}.


Furthermore, using \eqref{full_aggregated_variables_relationship1}, \eqref{full_aggregated_variables_relationship3} and \eqref{full_aggregated_variables_relationship4} in conjunction with the definition of the objective function of the aggregated model \eqref{aggregated_investment_model}, i.e., $\hat{J}(\boldsymbol{\hat{z}})$ as defined in \eqref{aggregated_investment_model_obj}, we obtain:
\begin{align*}
    \hat{J}(\boldsymbol{\hat{z}}) = & \, \sum_{g \in \boldsymbol{G}} C^{\mathrm{inv}}_g x_g + \sum_{k \in \boldsymbol{K}} \Bigg( \sum_{g \in \boldsymbol{G}} C^\mathrm{op}_g \frac{1}{T_k} \sum_{t \in \boldsymbol{T}_k} p_{g,t} T_k \Delta \\
    & \, + C^\mathrm{ns} \frac{1}{T_k} \sum_{t \in \boldsymbol{T}_k} d_t^{\mathrm{ns}} T_k \Bigg) \\
    = & \, \sum_{g \in \boldsymbol{G}} C^{\mathrm{inv}}_g x_g + \sum_{k \in \boldsymbol{K}} \sum_{t \in \boldsymbol{T}_k} \left(\sum_{g \in \boldsymbol{G}} C^\mathrm{op}_g p_{g,t} \Delta + C^\mathrm{ns} d_t^{\mathrm{ns}} \right) \\
    = & \, \sum_{g \in \boldsymbol{G}} C^{\mathrm{inv}}_g x_g + \sum_{t \in \boldsymbol{T}} \left(\sum_{g \in \boldsymbol{G}} C^\mathrm{op}_g p_{g,t} \Delta + C^\mathrm{ns} d_t^{\mathrm{ns}} \right)\\
    = & \, J(\boldsymbol{z}),
\end{align*}
where $J(\boldsymbol{z})$ is defined as in \eqref{full_scale_investment_model_obj}.
\end{proof}

In words, Proposition \ref{prop:main_result} implies that for any feasible solution $\boldsymbol{z}$ of the full-scale model \eqref{full_scale_investment_model}, there exists a corresponding feasible solution $\boldsymbol{\hat{z}}$ of the aggregated model \eqref{aggregated_investment_model} with the same objective function value.
Let $\boldsymbol{z}^\star$ and $\boldsymbol{\hat{z}}^\star$ denote the optimal solutions to the full-scale and aggregated models, respectively. Then, by Proposition \ref{prop:main_result}, the optimal objective function value of the aggregated model provides a lower bound on that of the full-scale model, i.e., $\hat{J}(\boldsymbol{\hat{z}}^\star) \leq J(\boldsymbol{z}^\star)$.

\subsection{A Time Series Aggregation Method with Bounded Error}\label{sec:Methodology_algorithm}
Building upon Proposition \ref{prop:main_result}, in this subsection we develop a TSA method with bounded error in the objective function.

A lower bound for the optimal objective function value of the full-scale model \eqref{full_scale_investment_model} is derived from the aggregated model \eqref{aggregated_investment_model} accordingly to Proposition \ref{prop:main_result}.
Moreover, the full-scale model \eqref{full_scale_investment_model} is a two-stage optimization problem, where the \textit{here-and-now} decisions are the investments and the binary variables, and the \textit{wait-and-see} decisions are the power generation and the unmet demand. 
Thus, fixing the investment variables from the aggregated model \eqref{aggregated_investment_model} in the full-scale model \eqref{full_scale_investment_model}
results in the \textbf{operational optimization} problem, which can be solved in parallel for each time period $t \in \boldsymbol{T}$ to compute an upper bound on the full-scale model’s optimal objective.

This procedure can be performed iteratively to refine the derived bounds by increasing the number of clusters in the aggregated model \eqref{aggregated_investment_model},
yielding the proposed TSA with bounded error in the objective function, as detailed in Algorithm~\ref{alg:TSA_algorithm}.

\begin{algorithm}[t]
\caption{Time Series Aggregation with Bounded Error in the Objective Function}\label{alg:TSA_algorithm}
\begin{algorithmic}[1]
\Require Parameters $\left\{F_{g,t}, D_t, \overline{X}_g, \underline{X}_g \, | \, g \in \boldsymbol{G}, t \in \boldsymbol{T}\right\}$, initial number of clusters $K^0$, step size $\alpha$, optimality threshold $\epsilon^\mathrm{thr}$, and maximum number of iterations $\overline{I}$.

\Ensure Objective function bounds ${{J^\mathrm{UB}}}^\star$ and ${{J^\mathrm{LB}}}^\star$.

\State \textit{Initialization}: $i \gets 0$; $K^i \gets K^0$; $\epsilon^i \gets +\infty$;

\While{$\epsilon^i > \epsilon^{\mathrm{thr}}$ and $i \leq \overline{I}$}

\State Assign the time periods $t \in \boldsymbol{T}$ to $\left\{\boldsymbol{T}_k^i \, | \, k \in \boldsymbol{K}^i\right\}$ using any clustering technique with $K^i$ clusters;

\State $\boldsymbol{\hat{z}}^\star \coloneqq \left\{\hat{x}_g^\star, \hat{b}_g^\star, \hat{p}_{g,k}^\star, \hat{d}^\mathrm{ns \star}_k \, | \, g \in \boldsymbol{G}, \, k \in \boldsymbol{K}\right\} \gets$ Solve the aggregated optimization model \eqref{aggregated_investment_model} for $\left\{\boldsymbol{T}_k^i \, | \, k \in \boldsymbol{K}^i\right\}$;

\State ${\tilde{J}^\mathrm{LB}} \gets \hat{J}\left(\boldsymbol{\hat{z}}^\star\right)$;

\vspace{1pt}

\State $\left\{p_{g,t}^\star, d^\mathrm{ns \star}_t \, | \, g \in \boldsymbol{G} \right\} \gets$ Solve the operational optimization with investment variable values fixed to those in $\boldsymbol{\hat{z}}^\star$; \Comment{In parallel $\forall t$}

\vspace{1pt}

\State ${\tilde{J}^\mathrm{UB}} \gets J\left( \left\{\hat{x}_g^\star, \hat{b}_g^\star, p_{g,t}^\star, d^\mathrm{ns \star}_t \, | \, g \in \boldsymbol{G}, t \in \boldsymbol{T}\right\} \right)$;

\vspace{2pt}

\If{$i = 0$} ${J^\mathrm{LB}}^{i + 1} \gets \tilde{J}^\mathrm{LB}$ and ${J^\mathrm{UB}}^{i + 1} \gets \tilde{J}^\mathrm{UB}$;

\Else

\State ${J^\mathrm{LB}}^{i + 1} \gets \mathrm{max}\left({J^\mathrm{LB}}^{i}, \tilde{J}^\mathrm{LB}\right)$;

\vspace{2pt}

\State ${J^\mathrm{UB}}^{i + 1} \gets \mathrm{min}\left({J^\mathrm{UB}}^{i}, \tilde{J}^\mathrm{UB}\right)$;

\EndIf

\State $\epsilon^{i+1} \gets \text{Evaluate \eqref{optimality_gap}}$ for ${J^\mathrm{LB}}^{i + 1}$ and ${J^\mathrm{UB}}^{i + 1}$;

\State $K^{i+1} \gets K^i + \alpha \, \lfloor \epsilon^{i+1} \rfloor$;

\State $i \gets i+1$;

\EndWhile

\State ${{J^\mathrm{UB}}}^\star \gets {J^\mathrm{UB}}^i$ and ${{J^\mathrm{LB}}}^\star \gets {J^\mathrm{LB}}^i$;

\end{algorithmic}
\end{algorithm}

Let $i$ denote the current iteration of the algorithm, and $\overline{I}$ the maximum number of iterations. The upper and lower bounds for the $i$-th iteration are ${J^{\mathrm{UB}}}^i$ and ${J^{\mathrm{LB}}}^i$, respectively. The algorithm terminates when the optimality gap $\epsilon$, defined as
\begin{equation}\label{optimality_gap}
    \epsilon \coloneqq \frac{{J^{\mathrm{UB}}} - {J^{\mathrm{LB}}}}{{J^{\mathrm{UB}}}},
\end{equation}
falls below a specified threshold $\epsilon^{\mathrm{thr}}$. 

In the first iteration, the number of clusters is initialized to $K^0$.
To reflect the intuition that a large optimality gap should lead to a substantial increase in the number of clusters used in the next iteration,
we propose an adaptive updating rule, where the number of clusters is increased by multiplying a positive integer value $\alpha$ (step size parameter) by the greatest integer less than or equal to $\epsilon$, denoted by $\lfloor \epsilon \rfloor$.

Notably, the validity of the bounds derived via Algorithm~\ref{alg:TSA_algorithm} holds regardless of the clustering technique employed,
and the operational optimization step of the algorithm consistently provides feasible solutions for the full-scale model \eqref{full_scale_investment_model}.

\section{Numerical Results}\label{sec:Results}
This section presents the numerical results obtained by the proposed Algorithm~\ref{alg:TSA_algorithm}. Subsection~\ref{sec:Results1} evaluates the algorithm's performance with various clustering techniques, while Subsection~\ref{sec:Results2} compares it to full-scale optimization.

We assume a non-supplied energy cost of $5,000$ \euro/MWh and investment costs of $40,000$ \euro/MW for thermal generators and $30,000$ \euro/MW for renewable generators.
The operational costs are $50$ \euro/MWh for thermal generators and $3$ \euro/MWh for renewables.
Each generator’s investment is constrained to $[0.1, 1]$ MW.
It is assumed that $20\%$ of the generators are thermal, with the remainder being renewable.
To automate the evaluation process and ensure replicability, we use synthetic data generated as follows. The capacity factor for thermal units is fixed at $F_t = 1$ for all $t \in \boldsymbol{T}$.
For renewable units, the capacity factors are generated as $F_t = e^{Y_t}$ for all $t \in \boldsymbol{T}$, where $Y_t$ follows a normal distribution, i.e., $Y_t \sim \mathcal{N}\left(\mu, \sigma^2\right)$, with mean $\mu = -1$ and standard deviation $\sigma = 0.5$.
We normalize the capacity factors to the range $[0, 1]$.
The energy demand is generated as $D_t \sim \mathcal{U}\left(0, \frac{|\boldsymbol{G}|}{3}\right)$ for all $t \in \boldsymbol{T}$, where $\mathcal{U}(a,b)$ denotes a uniform distribution over the interval $[a, b]$. We set $K^0 = 5$, $\overline{I} = 1000$, $\alpha = 100$, and $\epsilon^\mathrm{thr} = 0.01$ ($1\%$ error).

The optimization models are implemented on an Intel i7 processor and 32 GB of RAM, using Gurobi 12.0.1.

\subsection{Performance Evaluation of the Proposed Time Series Aggregation Method with Various Clustering Techniques}\label{sec:Results1}
\begin{figure}[h]
\centerline{\includegraphics[scale=0.47]{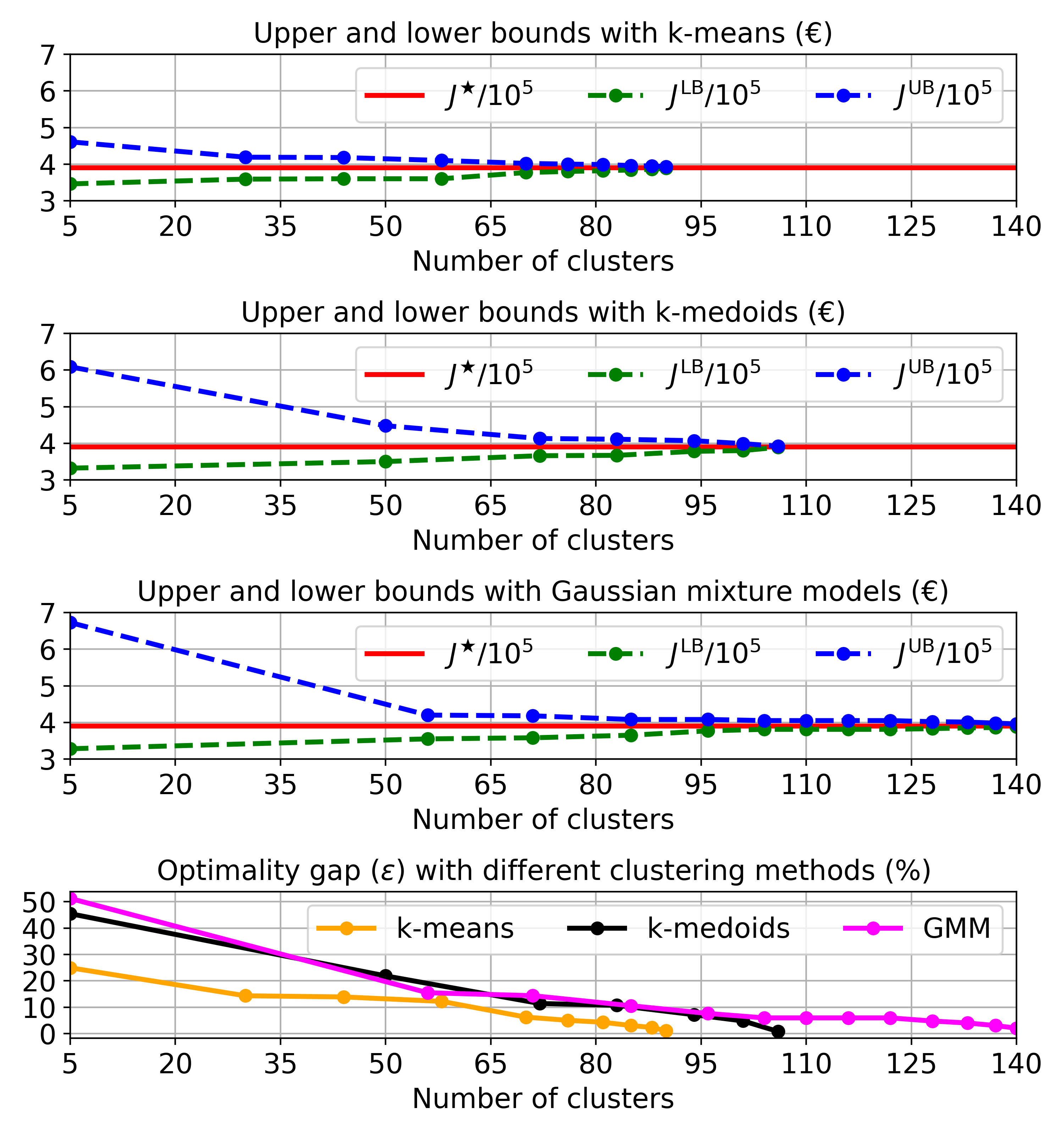}}
\caption{Upper and lower bounds computed for different clustering techniques using the proposed TSA with bounded error in the objective function.}
\label{fig:bounds_and_optim_gap}
\end{figure}

Figure~\ref{fig:bounds_and_optim_gap} presents the results of Algorithm~\ref{alg:TSA_algorithm} for $|\boldsymbol{T}| = 8760$ hours and $|\boldsymbol{G}| = 100$, using k-means, k-medoids, and Gaussian mixture models (GMM) for clustering.
The algorithm achieves less than $1\%$ error with $90$, $106$, and $142$ clusters for k-means, k-medoids, and GMM, respectively.
While k-means requires fewer clusters, k-medoids converges in less iterations ($7$ iterations compared to $10$).
Both k-medoids and GMM show significant initial fluctuations, reflecting their greater sensitivity to complex data structures, while k-means converges more smoothly.
GMM, assuming Gaussian-distributed data, struggles with the non-Gaussian nature of the input data, resulting in repeated bound values before reaching convergence.

\subsection{Comparison with Standard Full-Scale Optimization}\label{sec:Results2}
Table~\ref{tab:TSA_comp_res} presents the computational times of Algorithm~\ref{alg:TSA_algorithm} across various clustering techniques compared to standard full-scale optimization.
Although k-means generally requires more iterations to converge than k-medoids (see Fig.~\ref{fig:bounds_and_optim_gap}),
its lower time per iteration results in a reduced overall computational time.
In contrast, GMM performs the worst, highlighting the challenge of accurately reconstructing the true probability distribution of the input data,
which is critical beyond simply increasing the number of clusters.
For small-scale problems, some clustering techniques may lead to longer computation times than full-scale optimization.
However, for large-scale problems (e.g., $|\boldsymbol{T}| = 17520$ and $|\boldsymbol{G}| = 1000$), the results in Table~\ref{tab:TSA_comp_res} show that the proposed method offers a significant computational advantage over full-scale optimization.

\begin{table}[h]
\caption{Computational times of Algorithm~\ref{alg:TSA_algorithm} with various clustering techniques, compared to full-scale optimization (F-S).
Relative values to F-S are in brackets.
Bold values indicate the best computational performance observed with TSA.}
\begin{center}\label{tab:TSA_comp_res}
\resizebox{8.8cm}{!}{\begin{tabular}{|c|c|c|c|c|}
\hline
\multirow{3}{*}{\textbf{Settings}} & \multicolumn{4}{|c|}{\textbf{Computational time (s)}}\\
\cline{2-5}
& \multirow{2}{*}{\textbf{F-S}} & \multicolumn{3}{|c|}{\textbf{TSA with bounded error}} \\
\cline{3-5} 
& & \textbf{k-means} & \textbf{k-medoids} & \textbf{GMM} \\
\hline
\multicolumn{1}{|l|}{$|\boldsymbol{T}| = 8760$} & \multirow{2}{*}{$273$} & $431$ & $516$ & $689$ \\
\multicolumn{1}{|l|}{$|\boldsymbol{G}| = 100$} & & $(\boldsymbol{+58\%})$ & $(+89\%)$ & $(+152\%)$ \\
\hline
\multicolumn{1}{|l|}{$|\boldsymbol{T}| = 17520$} & \multirow{2}{*}{$793$} & $956$ & $1360$ & $1892$ \\
\multicolumn{1}{|l|}{$|\boldsymbol{G}| = 100$} & & $(\boldsymbol{+21\%})$ & $(+72\%)$ & $(+139\%)$ \\
\hline
\multicolumn{1}{|l|}{$|\boldsymbol{T}| = 8760$} & \multirow{2}{*}{4032} & 3346 & 5113 & 7629 \\
\multicolumn{1}{|l|}{$|\boldsymbol{G}| = 1000$} & & $(\boldsymbol{-17\%})$ & $(+27\%)$ & $(+89\%)$ \\
\hline
\multicolumn{1}{|l|}{$|\boldsymbol{T}| = 17520$} & \multirow{2}{*}{16911} & 7060 & 11862 & 15540 \\
\multicolumn{1}{|l|}{$|\boldsymbol{G}| = 1000$} & & $(\boldsymbol{-58\%})$ & $(-30\%)$ & $(-8\%)$ \\
\hline
\end{tabular}}
\end{center}
\end{table}

\section{Conclusion and Future Work}\label{sec:Conclusion}
This study addresses a VPP investment planning problem formulated as a MILP model.
As the number of binary variables and time periods increases, solving the full-scale model becomes computationally expensive.
To address this, we propose a clustering-based iterative TSA method with bounded error.
The derived bounds are demonstrated to remain valid regardless of the clustering technique used, and feasibility is consistently maintained.
Numerical results show the method's effectiveness and superior computational performance compared to full-scale optimization.
Future work will extend the proposed method to incorporate time-coupling constraints and explore integrating the a-posteriori TSA method from \cite{10037240} to refine the algorithm’s approximations toward exact solutions.

\section{Acknowledgment}
We thank Professor Bettina Klinz for her valuable feedback.


\begin{thebibliography}{00}
\bibitem{BARINGO2023101105} A. Baringo, L. Baringo, and J. M. Arroyo, ``Robust virtual power plant investment planning," in \textit{Sustain. Energy Grids Netw.}, vol. 35, pp. 101105, Sept. 2023.

\bibitem{GODERBAUER2019343} S. Goderbauer, M. Comis, and F. J.L. Willamowski, ``The synthesis problem of decentralized energy systems is strongly NP-hard,'' in \textit{Comput. Chem. Eng.}, vol. 124, pp. 343--349, May 2019.

\bibitem{10863601} T. Abreu, L. Carvalho, and V. Miranda, ``Long-term storage expansion planning considering uncertainty and intra-annual time series," \textit{2024 IEEE PES ISGT Eur.}, Dubrovnik, Croatia, 2024, pp. 1--5.

\bibitem{8369128} S. Pineda, and J. M. Morales, ``Chronological time-period clustering for optimal capacity expansion planning with storage," in \textit{IEEE Trans. Power Syst.}, vol. 33, no. 6, pp. 7162--7170, Nov. 2018.

\bibitem{hoffmann2020review} M. Hoffmann, L. Kotzur, D. Stolten, and M. Robinius, ``A review on time series aggregation methods for energy system models,'' in \textit{Energies}, vol. 13, no. 3, pp. 641, Feb. 2020.

\bibitem{10407974} C. Masternak et al., ``Clustering methods to select representative days for heat pumps optimal operation accounting for electricity grid constraints," \textit{2023 IEEE PES ISGT Eur.}, Grenoble, France, 2023, pp. 1--5.

\bibitem{SCHUTZ2018570} T. Schütz, M. H. Schraven, M. Fuchs, P. Remmen, and D. Müller, ``Comparison of clustering algorithms for the selection of typical demand days for energy system synthesis," in \textit{Renew. Energy}, vol. 129, pp. 570--582, Dec. 2018.

\bibitem{8017598} Y. Liu, R. Sioshansi, and A. J. Conejo, ``Hierarchical clustering to find representative operating periods for capacity-expansion modeling," in \textit{IEEE Trans. Power Syst.}, vol. 33, no. 3, pp. 3029--3039, May 2018.

\bibitem{9543162} L. Hoettecke, S. Thiem, and S. Niessen, ``Enhanced time series aggregation for long-term investment planning models of energy supply infrastructure in production plants," \textit{2021 Int. Conf. Smart Energy Syst. Technol. (SEST)}, Vaasa, Finland, 2021, pp. 1--6.

\bibitem{KOTZUR2018474} L. Kotzur, P. Markewitz, M. Robinius, and D. Stolten, ``Impact of different time series aggregation methods on optimal energy system design," in \textit{Renew. Energy}, vol. 117, pp. 474--487, Mar. 2018.

\bibitem{8610317} M. Sun, F. Teng, X. Zhang, G. Strbac, and D. Pudjianto, ``Data-driven representative day selection for investment decisions: A cost-oriented approach," in \textit{IEEE Trans. Power Syst.}, vol. 34, no. 4, pp. 2925--2936, July 2019.

\bibitem{7527691} K. Poncelet, H. Höschle, E. Delarue, A. Virag, and W. D’haeseleer, ``Selecting representative days for capturing the implications of integrating intermittent renewables in generation expansion planning problems," in \textit{IEEE Trans. Power Syst.}, vol. 32, no. 3, pp. 1936--1948, May 2017.

\bibitem{10037240} S. Wogrin, ``Time series aggregation for optimization: One-size-fits-all?," in \textit{IEEE Trans. Smart Grid}, vol. 14, no. 3, pp. 2489--2492, May 2023.

\bibitem{BAUMGARTNER2019127} N. Baumgärtner, B. Bahl, M. Hennen, and A. Bardow, ``RiSES3: Rigorous synthesis of energy supply and storage systems via time-series relaxation and aggregation," in \textit{Comput. Chem. Eng.}, vol. 127, pp. 127--139, Aug. 2019.

\bibitem{bahl2018typical} B. Bahl, T. S{\"o}hler, M. Hennen, and A. Bardow, ``Typical periods for two-stage synthesis by time-series aggregation with bounded error in objective function," in \textit{Front. Energy Res.}, vol. 5, pp. 35, Jan. 2018.

\bibitem{TSO2020115190} W. W. Tso, C. D. Demirhan, C. F. Heuberger, J. B. Powell, and E. N. Pistikopoulos, ``A hierarchical clustering decomposition algorithm for optimizing renewable power systems with storage," in \textit{Appl. Energy}, vol. 270, pp. 115190, July 2020.

\bibitem{YOKOYAMA2021120505} R. Yokoyama, K. Takeuchi, Y. Shinano, and T. Wakui, ``Effect of model reduction by time aggregation in multiobjective optimal design of energy supply systems by a hierarchical MILP method," in \textit{Energy}, vol. 228, pp. 120505, Aug. 2021.

\bibitem{TEICHGRAEBER20191283} H. Teichgraeber, and A. R. Brandt, ``Clustering methods to find representative periods for the optimization of energy systems: An initial framework and comparison," in \textit{Appl. Energy}, vol. 239, pp. 1283--1293, Apr. 2019.

\bibitem{BAHL2017900} B. Bahl, A. Kümpel, H. Seele, M. Lampe, and A. Bardow, ``Time-series aggregation for synthesis problems by bounding error in the objective function," in \textit{Energy}, vol. 135, pp. 900--912, Sept. 2017.

\bibitem{10267715} L. Santosuosso et al., ``Economic model predictive control for the energy management problem of a virtual power plant including resources at different voltage levels," \textit{27th Int. Conf. Electricity Distrib. (CIRED 2023)}, Rome, Italy, 2023, pp. 2044--2048.

\bibitem{CARDONAVASQUEZ2024110267} D. Cardona-Vasquez, T. Klatzer, B. Klinz, and S. Wogrin, ``Enhancing time series aggregation for power system optimization models: Incorporating network and ramping constraints," in \textit{Electr. Power Syst. Res.}, vol. 230, pp. 110267, May 2024.

\end{thebibliography}
\end{document}